\newcommand{\ff}{\mathcal{F}}
\newcommand{\bb}{\mathcal{B}}
\newcommand{\rr}{\mathbb{R}}
\newcommand{\cc}{\mathbb{C}}
\newcommand{\pp}{{\sf{P}}}
\newcommand{\xx}{{\sf X}}
\newcommand{\pr}{\partial}
\newcommand{\llf}{{\sf L}}
\newtheorem{thm}{Theorem}
\newtheorem{lemma}{Lemma}
\newtheorem{assumption}{Assumption A\hspace*{-4pt}}
\theoremstyle{definition}
\newtheorem{defin}{Definition}
\theoremstyle{remark}% Remark
\newtheorem{remark}{Remark}
\begin{document}

\begin{center}\Large
Transport equation  driven by a stochastic measure
\end{center}

\begin{center}
Vadym Radchenko \footnote{Department of Mathematical Analysis, Taras Shevchenko National University of Kyiv,
01601 Kyiv, {Ukraine},\\ vadymradchenko@knu.ua}
\footnote{This work was supported by Alexander von Humboldt Foundation, grant 1074615. The author is grateful to Prof. M.~Z\"{a}hle for fruitful discussions during the preparation of this paper and thanks the Friedrich-Schiller-University of Jena for its hospitality.}
\end{center}

\begin{abstract}
We consider the stochastic transport equation where the randomness is given by the symmetric integral with respect to stochastic measure. For stochastic measure, we assume only $\sigma$-additivity in probability and continuity of paths. The existence and uniqueness of the weak solution to the equation are proved.
\end{abstract}

\section{Introduction}\label{scintr}

We consider the stochastic transport equation that formally can be written in the form
\begin{equation}\label{eqtrans}
\begin{split}
\dfrac{\pr u(t,x)}{\pr t}\,{d}t+b(t,x)\dfrac{\pr u(t,x)}{\pr x}\,{d}t+\dfrac{\pr u(t,x)}{\pr x}\circ{d}\mu(t)=0,\\
u(0,x)=u_0(x),\quad x\in\rr,\quad t\in [0,T].
\end{split}
\end{equation}
Here $\mu$ is a stochastic measure (SM), see Definition~\ref{dfstme} below. We assume that $\mu$ is defined on the Borel $\sigma$-algebra of $[0,T]$, and the process $\mu_t=\mu((0,t])$ has a continuous paths. Assumptions on $b$ and $u_0$ are given in Section~\ref{sceqfo}. Equation~\eqref{eqtrans} we consider in the weak sense, definition of the solution is given in~\eqref{eqweaks}.

We will prove existence and uniqueness of the solution. Similarly to other types of stochastic transport equation, we demonstrate that the solution is given by formula $u(t,x)=u_0(X_t^{-1}(x))$ where $X_t(x)$ satisfies the auxiliary equation~\eqref{eqaddx}.

Stochastic integral with respect to~$\mu$ is defined as symmetric integral. This Stratonovich-type integral was studied in~\cite{rads16}, we recall it's definition and basic properties in subsection~\ref{sssyin}. SMs include many important classes of processes, but we can prove existence of the integral only for integrands of the form $f(\mu_t,t)$ where $f\in {\cc}^{1,1} ({\rr}\times [0,T] )$. Thus, we will find our solution~$u$ having this form.

For the stochastic transport equation driven by the Wiener process, the existence and uniqueness of the solution were proved under different assumptions on $b$ and $u_0$, see  \cite{catoli},
\cite{fang07},
\cite{flandoli10},
\cite{moham15},
\cite{wei22dini}.
It was shown that stochastic term in the transport equation leads to regularization of the solution, see \cite{beck19}, \cite{fedflan13}, \cite{flandoli10}. Equation in bounded domain was studied in \cite{neves21}. In these papers, the stochastic term is given by Stratonovich integral and solution is considered in the weak sense. In \cite{wei21regul} the existence and uniqueness of stochastic strong solution are obtained, the renormalized weak solution was studied in~\cite{zhang10}.

Transport equation with other stochastic integrators is less studied. The existence and uniqueness of the solution to equation driven by L\'{e}vy white noise was proved in~\cite{pros04}, to equation driven by fractional Brownian motion -- in~\cite{oltud15}. In both papers, the Malliavin calculus approach was used.

In this paper, we consider the rather general stochastic integrator. At the same time, we need some restrictive assumptions on $b$ and $u_0$, and study the case of one dimensional spatial variable.

The recent results for the equations driven by stochastic measures may be found in~\cite{bodnarchuk_2020}, \cite{manikin_2022as}, \cite{manikin_2022av}.

The rest of the paper is organized as follows. In Section~\ref{scprel} we recall the definitions and basic facts concerning stochastic measures and symmetric integral. Also we prove the analogue of the Fubini theorem for our integral that we will need below. In Section~\ref{sceqfo} we give our assumptions on the equation and formulate the main result.
Section~\ref{scexis} is devoted to the proof of the existence of the solution, and we give the explicit formula for $u$. In Section~\ref{scuniq}, under some additional assumptions, we obtain the uniqueness of the solution.

\section{Preliminaries}\label{scprel}

\subsection{Stochastic measures}\label{ssstme}

Let $\llf_0=\llf_0(\Omega, {\ff}, {\pp} )$ be the set of all real-valued
random variables defined on the complete probability space $(\Omega, {\ff}, {\pp} )$ (more precisely, the set of equivalence classes). Convergence in $\llf_0$ means the convergence in probability. Let ${\xx}$ be an arbitrary set and ${\bb}$ a $\sigma$-algebra of subsets of ${\xx}$.

\begin{defin}\label{dfstme}
A $\sigma$-additive mapping $\mu:\ {\bb}\to \llf_0$ is called {\em stochastic measure} (SM).
\end{defin}

We do not assume the moment existence or martingale properties for SM. In other words, $\mu$ is $\llf_0$--valued vector measure.

Important examples of SMs are orthogonal stochastic measures, $\alpha$-stable random measures defined on a $\sigma$-algebra for $\alpha\in (0,1)\cup(1,2]$ (see \cite[Chapter 3]{samtaq}).

Many examples of the SMs on the Borel subsets of $[0,T]$ may be given by the Wiener-type integral
\begin{equation}\label{eqmuax}
\mu(A)=\int_{[0,T]} {\mathbf 1}_A(t)\,{d}X_t.
\end{equation}

We note the following cases of processes $X_t$ in~\eqref{eqmuax} that generate SM.

\begin{enumerate}

\item\label{itmart} $X_t$~-- any square integrable continuous martingale.

\item\label{itfrbr} $X_t=W_t^H$~-- the fractional Brownian motion with Hurst index $H>1/2$, see Theorem~1.1~\cite{memiva}.

\item\label{itsfrb} $X_t=S_t^k$~-- the sub-fractional Brownian motion for $k=H-1/2,\ 1/2<H<1$, see Theorem 3.2~(ii) and Remark 3.3~c) in~\cite{tudor09}.

\item\label{itrose} $X_t=Z_H^k(t)$~-- the Hermite process, $1/2<H<1$, $k\ge 1$, see~\cite{tudor07}, \cite[Section 3.1.3]{tudor13}. $Z_H^2(t)$ is known as the Rosenblatt process, see also~\cite[Section~3]{tudor08}.

\end{enumerate}

 The detailed theory of stochastic measures is presented in~\cite{radbook}.

The results of this paper will be obtained under the following assumption on $\mu$.

\begin{assumption}\label{assborcont}
$\mu$ is an SM on Borel subsets of $[0,T]$, and the process $\mu_{t}=\mu((0,t])$ has continuous paths on $[0,T]$.
\end{assumption}

Processes $X_t$ in examples \ref{itmart}--\ref{itrose} are continuous, therefore A\ref{assborcont} holds in these cases.

\subsection{Symmetric integral}\label{sssyin}

The symmetric integral of random functions with respect to stochastic measures was considered in~\cite{rads16}. We review the basic facts and definitions concerning this integral.

\begin{defin} Let $\xi_{t}$ and $\eta_{t}$ be random processes on $[0,T]$, $0=t_{0}^n<t_{1}^n<\dots<t_{j_n}^n=T$ be a sequence of partitions such that $\max_k |t_k^n-t_{k-1}^n|\to 0$, $n\to\infty$. We define
\begin{equation}\label{eqdfis}
\int_{(0,T]}\xi_{t}\circ{d}\eta_{t}:={\rm p}\lim_{n\to\infty}\sum_{k=1}^{j_n}\frac{\xi_{t_{k-1}^n}+\xi_{t_{k}^n}}{2}\, (\eta_{t_{k}^n}- \eta_{t_{k-1}^n} )
\end{equation}
provided that this limit in probability exists for any such sequence of partitions.
\end{defin}

For Wiener process $\eta_t$ and adapted $\xi_t$ we obtain the classical Stratonovich integral. If $\eta_t$ and $\xi_t$ are H\"{o}lder continuous with exponents $\gamma_\eta$ and $\gamma_\xi$, $\gamma_\eta+\gamma_\xi>1$, then value of~\eqref{eqdfis} equals to the integral defined in~\cite{zahle98}.

The following theorem describes the class of processes for which the integral is well defined.

\begin{thm} (Theorem~4.6~\cite{rads16})\label{thintf}
Let A\ref{assborcont} holds, $f\in {\cc}^{1,1} ({\rr}\times [0,T] )$. Then integral~\eqref{eqdfis} of $f(\mu_{t}, {t})$ with respect to $\mu_{t}$ is well defined, and
\begin{equation}
\int_{(0,T]}f(\mu_{t}, {t})\circ{d}\mu_{t}
=F(\mu_{T}, {T})-\int_{(0,T]} F_2'(\mu_{t},{t})\,{d}{t},\label{eqintf}
\end{equation}
where $F(z,v)=\int_{0}^z f(y,v)\,{d}y$.
\end{thm}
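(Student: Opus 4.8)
The plan is to read \eqref{eqintf} as the It\^o--Stratonovich change-of-variables formula for $F(\mu_t,t)$ and to prove it directly from Definition~\eqref{eqdfis}, by showing that \emph{every} trapezoidal sum converges to the right-hand side of \eqref{eqintf}. Since that right-hand side does not depend on the partition, establishing this convergence for an arbitrary admissible partition sequence proves simultaneously that the ${\rm p}\lim$ in \eqref{eqdfis} exists and that it equals $F(\mu_T,T)-\int_{(0,T]}F_2'(\mu_t,t)\,dt$. The starting point is the telescoping identity $\sum_{k}\big(F(\mu_{t_k^n},t_k^n)-F(\mu_{t_{k-1}^n},t_{k-1}^n)\big)=F(\mu_T,T)$, valid because $\mu_0=\mu(\emptyset)=0$ and $F(0,0)=0$. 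I would split each increment of $F$ into a spatial part at frozen time $t_k^n$ and a temporal part at frozen space $\mu_{t_{k-1}^n}$, using $F_1'=f$ and $F(a,t_k^n)-F(a,t_{k-1}^n)=\int_{t_{k-1}^n}^{t_k^n}F_2'(a,s)\,ds$.

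Next I compare this telescoping sum with the trapezoidal sum $S_n$ from \eqref{eqdfis}, writing $S_n=S_n^{(1)}+R_n$, where $S_n^{(1)}$ uses the common time $t_k^n$ in both integrand evaluations and $R_n$ collects the time mismatch $f(\mu_{t_{k-1}^n},t_{k-1}^n)-f(\mu_{t_{k-1}^n},t_k^n)$. Two of the three resulting error terms are routine. The temporal terms $\sum_k\int_{t_{k-1}^n}^{t_k^n}F_2'(\mu_{t_{k-1}^n},s)\,ds$ converge to $\int_{(0,T]}F_2'(\mu_s,s)\,ds$ by a Riemann-sum argument: $F_2'$ is continuous, the path $s\mapsto\mu_s$ is continuous hence uniformly continuous on $[0,T]$ by A\ref{assborcont}, and $\max_k|\mu_{t_k^n}-\mu_{t_{k-1}^n}|\to0$. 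The mismatch term $R_n$ is bounded by $\tfrac12\sup|f_2'|\cdot T\cdot\max_k|\mu_{t_k^n}-\mu_{t_{k-1}^n}|$, the supremum being taken over the a.s. compact range of the path, and hence tends to $0$; here $f\in\cc^{1,1}(\rr\times[0,T])$ guarantees that $f$ is Lipschitz in time on compacts.

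The essential term is the spatial one, namely the accumulated trapezoidal quadrature error $E_k=\tfrac12\big(f(\mu_{t_{k-1}^n},t_k^n)+f(\mu_{t_k^n},t_k^n)\big)(\mu_{t_k^n}-\mu_{t_{k-1}^n})-\int_{\mu_{t_{k-1}^n}}^{\mu_{t_k^n}}f(y,t_k^n)\,dy$. Writing $g(y)=f(y,t_k^n)$ and $m_k=\tfrac12(\mu_{t_{k-1}^n}+\mu_{t_k^n})$, a Fubini computation gives the centred representation $E_k=\int_{\mu_{t_{k-1}^n}}^{\mu_{t_k^n}}(s-m_k)\big(g'(s)-g'(m_k)\big)\,ds$, so that $E_k=\zeta_k^n(\mu_{t_k^n}-\mu_{t_{k-1}^n})^2$ with $|\zeta_k^n|\le\tfrac14\,\omega\big(\max_k|\mu_{t_k^n}-\mu_{t_{k-1}^n}|\big)$, where $\omega$ is the modulus of continuity of $f_1'$ on the relevant compact set. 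Thus $\max_k|\zeta_k^n|\to0$ a.s., and the whole argument reduces to showing that $\sum_k\zeta_k^n(\mu_{t_k^n}-\mu_{t_{k-1}^n})^2\to0$ in probability.

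This last step is the main obstacle, and it is precisely where Definition~\ref{dfstme}—and not merely the continuity of the paths—enters. For a general continuous path one has no control whatsoever over the second-order sums $\sum_k(\mu_{t_k^n}-\mu_{t_{k-1}^n})^2$, but for an SM the $\sigma$-additivity in $\llf_0$ keeps them bounded in probability uniformly in $n$ (for the Wiener example they converge to the quadratic variation, while for the fractional examples with $H>1/2$ they tend to $0$). I would therefore isolate, as the key lemma, a statement of the form: if $\max_k|\zeta_k^n|\to0$ in probability, then $\sum_k\zeta_k^n(\mu_{t_k^n}-\mu_{t_{k-1}^n})^2\to0$ in probability; its proof rests on the $\sigma$-additivity of $\mu$ (together with the uniform $\sigma$-additivity supplied by the Nikodym--Vitali--Hahn--Saks phenomenon for $\llf_0$-valued measures), and this is the genuinely technical heart of the matter. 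Granting this lemma, $\sum_kE_k\to0$, and collecting the three contributions gives $S_n\to F(\mu_T,T)-\int_{(0,T]}F_2'(\mu_t,t)\,dt$, which is \eqref{eqintf}.
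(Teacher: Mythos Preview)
The paper does not contain a proof of this statement: Theorem~\ref{thintf} is quoted verbatim from \cite{rads16} (it is labelled ``Theorem~4.6~\cite{rads16}'') and is used in the present paper only as a black box, in Lemma~\ref{lmlmfb} and in the existence and uniqueness arguments. There is therefore nothing in this paper to compare your proposal against.

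That said, your outline is the standard route to such a chain rule and is internally sound. The telescoping decomposition of $F(\mu_T,T)$, the splitting into spatial and temporal increments, the bound on the time-mismatch term $R_n$, the Riemann-sum argument for $\sum_k\int_{t_{k-1}^n}^{t_k^n}F_2'(\mu_{t_{k-1}^n},s)\,ds\to\int_{(0,T]}F_2'(\mu_s,s)\,ds$, and the centred trapezoidal error representation $E_k=\int(s-m_k)(g'(s)-g'(m_k))\,ds$ are all correct as written. You are also right that the only substantive step is the control of $\sum_k\zeta_k^n(\mu_{t_k^n}-\mu_{t_{k-1}^n})^2$ and that this is exactly where the $\sigma$-additivity of $\mu$ in $\llf_0$ (rather than pathwise continuity alone) is used: the relevant fact is that for an SM the family of sums $\sum_k\mu(A_k)^2$ over disjoint $(A_k)$ is bounded in probability, which is one of the basic structural properties of $\llf_0$-valued vector measures established in \cite{rads16} and \cite{radbook}. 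Since you explicitly flag this lemma as the technical heart and defer its proof, your proposal is an accurate sketch rather than a complete argument; to turn it into a proof you would need to supply that boundedness statement and the passage from ``$\max_k|\zeta_k^n|\to0$ and $\sum_k(\Delta_k\mu)^2$ bounded in probability'' to ``$\sum_k\zeta_k^n(\Delta_k\mu)^2\to0$ in probability'', which is straightforward once the boundedness is in hand.
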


Some other properties and equations with the symmetric integral are considered in~\cite{rads16}, \cite{radavsm19}, \cite{radbook}.

\subsection{Fubini theorem for symmetric integral}

We will need the following auxiliary statement.

\begin{lemma}\label{lmlmfb} Let $f:\rr\times [0,T]\times\rr\to\rr$ be measurable and has continuous derivatives $f'_y(y,t,x),\ f'_t(y,t,x)$. Assume that
$$
|f(y,t,x)|\le g(x),\quad |f'_y(y,t,x)|\le g_1(x),\quad |f'_t(y,t,x)|\le g_2(x)
$$
for some $g,\ g_1,\ g_2\in \llf^1(\rr,{d}x)$. Then
\begin{equation}\label{eqlmfb}
\int_{\rr}\int_{(0,T]} f(\mu_t,t,x)\circ{d}\mu_t\,{d}x= \int_{(0,T]}  \int_{\rr}f(\mu_t,t,x)\,{d}x\,\circ {d}\mu_t.
\end{equation}
\end{lemma}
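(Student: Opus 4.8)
The plan is to reduce the identity \eqref{eqlmfb} to a purely deterministic, pathwise Fubini theorem by using the representation formula \eqref{eqintf} of Theorem~\ref{thintf} to evaluate both symmetric integrals explicitly, thereby replacing objects defined only as limits in probability by concrete functions of $\omega$.

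First I would apply Theorem~\ref{thintf} to the left-hand side. For each fixed $x\in\rr$ the map $(y,t)\mapsto f(y,t,x)$ lies in ${\cc}^{1,1}({\rr}\times[0,T])$, since the continuous partial derivatives $f'_y,f'_t$ exist. Writing $F(z,v,x)=\int_0^z f(y,v,x)\,{d}y$, formula \eqref{eqintf} gives
\[
\int_{(0,T]} f(\mu_t,t,x)\circ{d}\mu_t = F(\mu_T,T,x)-\int_{(0,T]}\Big(\int_0^{\mu_t} f'_t(y,t,x)\,{d}y\Big){d}t,
\]
where I have differentiated under the integral sign (legitimate because $f'_t$ is continuous and dominated by $g_2$) to obtain $F'_2(z,v,x)=\int_0^z f'_t(y,v,x)\,{d}y$. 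Integrating in $x$ then writes the left side of \eqref{eqlmfb} as $\int_\rr F(\mu_T,T,x)\,{d}x$ minus an iterated $x$-then-$t$ integral.

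Next I would treat the right-hand side. Set $h(z,t)=\int_\rr f(z,t,x)\,{d}x$; the bound $|f|\le g\in\llf^1$ makes this finite, and differentiation under the integral sign (dominated by $g_1,g_2\in\llf^1$) shows that $h'_z=\int_\rr f'_y\,{d}x$ and $h'_t=\int_\rr f'_t\,{d}x$ are continuous, so $h\in{\cc}^{1,1}({\rr}\times[0,T])$ and Theorem~\ref{thintf} again applies. With $H(z,v)=\int_0^z h(y,v)\,{d}y$, classical Fubini gives $H(z,v)=\int_\rr F(z,v,x)\,{d}x$ and $H'_2(z,v)=\int_\rr\big(\int_0^z f'_t(y,v,x)\,{d}y\big)\,{d}x$, so that \eqref{eqintf} expresses the right side of \eqref{eqlmfb} as the same boundary term $\int_\rr F(\mu_T,T,x)\,{d}x$ minus an iterated $t$-then-$x$ integral of the identical integrand.

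Comparing the two expressions, the boundary terms coincide, and the claim reduces to interchanging the deterministic integrals $\int_\rr\int_{(0,T]}$ and $\int_{(0,T]}\int_\rr$ of the function $(t,x)\mapsto\int_0^{\mu_t}f'_t(y,t,x)\,{d}y$. This is justified by the ordinary Fubini theorem once absolute integrability is checked: since A\ref{assborcont} guarantees that $\mu$ has continuous paths, $M(\omega):=\sup_{t\in[0,T]}|\mu_t(\omega)|<\infty$ pathwise, whence $\big|\int_0^{\mu_t}f'_t(y,t,x)\,{d}y\big|\le M(\omega)\,g_2(x)$, which is integrable over $[0,T]\times\rr$ with respect to ${d}t\,{d}x$. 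The step requiring most care is precisely this reduction: the left-hand integrand of \eqref{eqlmfb} is a priori only a limit in probability, and it is the explicit representation \eqref{eqintf} that converts it into a pathwise, jointly measurable function to which deterministic Fubini applies; verifying the ${\cc}^{1,1}$-regularity of $h$ and exhibiting the dominating bound $M(\omega)\,g_2(x)$ that legitimizes every interchange is exactly where the hypotheses $g,g_1,g_2\in\llf^1$ are used.
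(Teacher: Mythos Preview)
Your proposal is correct and follows essentially the same approach as the paper: apply the representation formula \eqref{eqintf} to each side of \eqref{eqlmfb}, thereby reducing the symmetric integrals to ordinary Lebesgue integrals of $F$, $\tilde F$ and their $t$-derivatives, and then match boundary terms and invoke the classical Fubini theorem for the remaining deterministic iterated integrals. You supply more explicit justification than the paper does (verifying $h\in{\cc}^{1,1}$ via dominated convergence and exhibiting the dominating bound $M(\omega)g_2(x)$), but the argument is the same.
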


\begin{proof}
Denote
$$
F(z,t,x)=\int_{0}^z f(y,t,x)\,{d}y,\quad \tilde{F}(z,t)=\int_{0}^z \int_{\rr}f(y,t,x)\,{d}x\,{d}y ,\quad z\in\rr.
$$
Theorem~\ref{thintf} and assumptions of the lemma and  imply that the integrals in~\eqref{eqlmfb} are well-defined. Applying~\eqref{eqintf}, we transform left-hand side and right-hand side of~\eqref{eqlmfb}
\begin{eqnarray*}
\int_{\rr} \int_{(0,T]} f(\mu_t,t,x)\circ \,{d}\mu_t\, {d}x=\int_{\rr} \Bigl(F(\mu_{T},T,x)-\int_{(0,T]} F_t'(\mu_{t},t,x)\,{d}{t}\Bigr){d}x,\\
\int_{(0,T]} \int_{\rr}f(\mu_t,t,x)\,{d}x\,\circ{d}\mu_t =\tilde{F}(\mu_T,T)-\int_{(0,T]} \tilde{F}_t'(\mu_{t},t)\,{d}{t}.
\end{eqnarray*}
The equalities
\begin{eqnarray*}
\int_{\rr}F(\mu_{T},T,x)\,{d}x=\tilde{F}(\mu_T,T)\\
\Leftrightarrow \int_{\rr}  \int_{0}^{\mu_T} f(y,T,x)\,{d}y\,{d}x=\int_{0}^{\mu_T} \int_{\rr}f(y,T,x)\,{d}x\,{d}y ,\\
\int_{\rr}\int_{(0,T]} F_t'(\mu_{t},t,x)\,{d}{t}\,{d}x =\int_{(0,T]} \tilde{F}_t'(\mu_{t},t)\,{d}{t}\\
\Leftrightarrow \int_{\rr}\int_{(0,T]}  \int_{0}^{\mu_t} f'_t(y,t,x)\,{d}y\,{d}{t}\,{d}x = \int_{(0,T]}\int_{0}^{\mu_{t}}  \int_{\rr}f'_t(y,t,x)\,{d}x\,{d}y\,{d}{t}.
\end{eqnarray*}
hold by usual Fubini's theorem.
\end{proof}

\section{The problem. Formulation of the main result.}\label{sceqfo}

We consider equation~\eqref{eqtrans} in the weak form.
This means that $u:[0,T]\times \rr\times\Omega\to\rr$ is a measurable random function such that for each $\varphi\in\cc_0^{\infty}(\rr)$ holds
\begin{equation}
\begin{split}\label{eqweaks}
\int_{\rr}u(t,x)\varphi(x)\,{d}x=\int_{\rr}u_0(x)\varphi(x)\,{d}x
+\int_0^t  \int_{\rr}u(s,x)\Bigl(b(s,x)\varphi'(x)+\dfrac{\partial b(s,x)}{\partial x}\varphi(x)\Bigr)\,{d}x\,{d}s\\
+\int_0^t \int_{\rr}u(s,x)\varphi'(x)\,{d}x\,\circ {d}\mu(s).
\end{split}
\end{equation}
By $\cc_0^{\infty}(\rr)$ we denote the class of infinitely differentiable functions with the compact support.

For our equation, we will refer to the following assumptions.

\begin{assumption}\label{assfuzero} $u_0:\rr\times\Omega\to\rr$ is measurable and has continuous derivative in~$x$.
\end{assumption}

\begin{assumption}\label{assfuzerob} $|u_0(x)|\le C(\omega)$ for some finite random constant~$C(\omega)$.
\end{assumption}

\begin{assumption}\label{assfb} $b:[0,T]\times \rr\to\rr$ is continuous,  $\dfrac{\pr b(t,x)}{\pr x}$ is continuous and bounded.
\end{assumption}

\begin{assumption}\label{assfbl} $\sup_{t\in [0,T]}\int_{|x|\ge r}\dfrac{|b(t,x)|}{1+|x|}\,{d}x\to 0,\ r\to\infty$.
\end{assumption}

Note that, by A\ref{assfb}, $b$ is globally Lipschitz continuous in $x$.

For each fixed $\omega\in\Omega$,  we consider the following auxiliary equation
\begin{eqnarray}\label{eqaddx}
X_{t}(x)=x+\int_0^t b(r,X_{r}(x))\,{d}r+\mu_t,\quad 0\le t\le T.
\end{eqnarray}

Assumption~A\ref{assfb} imply that \eqref{eqaddx} has a unique solution on~$[0,T]$ for each~$x$.

By well known result of theory of ordinary differential equations, the solution has a continuous derivative
$$
X_t'(x)=\frac{\pr}{\pr x} X_t(x).
$$

We have
\begin{equation}
\begin{split}\label{eqdifx}
X_t'(x)=1+\int_0^t \dfrac{\partial b(r,X_{r}(x))}{\partial x} X'_{r}(x)\,{d}r\\
\Rightarrow\dfrac{\pr}{\pr t}X_t'(x)=\dfrac{\partial b(t,X_{t}(x))}{\partial x} X'_{t}(x)\quad
\Rightarrow\quad X'_{t}(x)=\exp\Bigl\{\int_0^t\dfrac{\partial b(s,X_{s}(x))}{\partial x}\,{d}s\Bigr\}.
\end{split}
\end{equation}
Therefore, $X'_{t}(x)>0$, and the function $X_t^{-1}(x)$, where inverse is taken with respect to variable $x$, is well defined.

Note that $X_t$ is the sum of a differentiable function of $t$ and $\mu_t$, $X'_{t}$ is a differentiable function of $t$. This allows us to consider integrals
$$
\int_{(0,T]}g(X_{t}, X'_{t},\mu_t,t)\circ{d}\mu_{t},\quad g\in {\cc}^{1,1,1,1} ({\rr}^3\times [0,T]).
$$

The main result of the paper is the following.

\begin{thm} 1) Let Assumptions A\ref{assborcont}, A\ref{assfuzero}, A\ref{assfb} hold, $X_{t}(x)$ be the solution of~\eqref{eqaddx}. Then the random function
\begin{equation}\label{eqsolut}
u(t,x)=u_0(X_t^{-1}(x))
\end{equation}
satisfies~\eqref{eqweaks}.

2) In addition, let Assumptions A\ref{assfuzerob} and A\ref{assfbl} hold. Then solution~\eqref{eqsolut} is unique in the class of measurable random functions $u(t,x)=h(\mu_t,t,x)$, such that $h(\cdot,\cdot,x)\in {\cc}^{1,1} (\rr\times [0,T])$ for each $x\in\rr$, and $|u(t,x)|\le C(\omega)$ for some finite random constant~$C(\omega)$.
\end{thm}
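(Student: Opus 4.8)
The plan is to argue pathwise in $\omega$, using the characteristic flow $X_t(x)$ of~\eqref{eqaddx} together with the calculus supplied by Theorem~\ref{thintf} and Lemma~\ref{lmlmfb}. For part~1) the key remark is that, writing $X_t(x)=\mu_t+Y_t(x)$, the map $Y_t(x)=X_t(x)-\mu_t=x+\int_0^t b(r,X_r)\,dr$ is continuously differentiable in $t$ (with $t$-derivative $b(t,X_t(x))$) even though $\mu_t$ is not, and by~\eqref{eqdifx} the factor $X_t'(x)$ is positive and $C^1$ in $t$; hence $x\mapsto X_t(x)$ is an increasing $C^1$-diffeomorphism of $\rr$ with $C^1$ inverse $X_t^{-1}$, and joint measurability of $u$ follows from continuity of the flow in its data. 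Fixing $\varphi\in\cc_0^\infty(\rr)$ and $y\in\rr$, the function $\Psi^y(z,t)=\varphi(Y_t(y)+z)\,X_t'(y)$ lies in $\cc^{1,1}(\rr\times[0,T])$, so Theorem~\ref{thintf} applies to $f=\pr\Psi^y/\pr z$; a short rearrangement of~\eqref{eqintf} yields the chain rule $\Psi^y(\mu_t,t)=\Psi^y(0,0)+\int_{(0,t]}\Psi^y_z(\mu_s,s)\circ d\mu_s+\int_0^t\Psi^y_s(\mu_s,s)\,ds$, which written out is the pathwise identity
\[
\varphi(X_t(y))X_t'(y)=\varphi(y)+\int_{(0,t]}\varphi'(X_s(y))X_s'(y)\circ d\mu_s+\int_0^t\Big(\varphi'(X_s(y))\,b(s,X_s(y))+\varphi(X_s(y))\tfrac{\pr b}{\pr x}(s,X_s(y))\Big)X_s'(y)\,ds .
\]

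I would then multiply by $u_0(y)$, integrate $dy$ over $\rr$, interchange $\int_\rr dy$ with the symmetric integral using Lemma~\ref{lmlmfb}, and finally substitute $x=X_s(y)$ (so $dx=X_s'(y)\,dy$ and $u_0(y)=u_0(X_s^{-1}(x))=u(s,x)$); the three resulting terms are exactly the three terms of~\eqref{eqweaks}. The one delicate point is the interchange: for fixed $s$ the integrand $u_0(y)\varphi'(X_s(y))X_s'(y)$ is supported in the bounded $y$-set $X_s^{-1}(\operatorname{supp}\varphi)$, and as $s$ runs over $[0,t]$ these sets stay inside a fixed compact interval; since only the values $z=\mu_s$ are ever used and these lie in the compact range of the continuous path, one may truncate the first argument outside that range and thereby meet the integrable-domination hypotheses of Lemma~\ref{lmlmfb}. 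This is the main technical obstacle in part~1).

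For part~2), let $u_1,u_2$ be two solutions in the stated class, $w=u_1-u_2=h(\mu_t,t,x)$ with $h(\cdot,\cdot,x)\in\cc^{1,1}$, $|w|\le C(\omega)$ and $w(0,\cdot)\equiv0$; it satisfies~\eqref{eqweaks} with $u_0\equiv0$. The plan is a duality argument. First I would extend~\eqref{eqweaks} to test functions $\varphi(s,x)$ that are $C^1$ in $s$ and compactly supported in $x$: combining the evolution of $\int_\rr w(s,x)\varphi(s,x)\,dx$ with the $s$-dependence of $\varphi$ produces a product rule for the symmetric integral whose stochastic term $\int_0^t\big(\int_\rr w\,\pr_x\varphi\,dx\big)\circ d\mu_s$ is again of the form $\int\Theta(\mu_s,s)\circ d\mu_s$ with $\Theta\in\cc^{1,1}$ (here the assumptions $h(\cdot,\cdot,x)\in\cc^{1,1}$ and $|w|\le C(\omega)$ are used), so Theorem~\ref{thintf} legitimizes the manipulation. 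Then, for a fixed terminal time $t$ and $\psi\in\cc_0^\infty(\rr)$, I would transport $\psi$ backward along the flow, i.e. take $\varphi$ solving in the symmetric sense the adjoint continuity equation $\pr_s\varphi\,ds+\big(b\,\pr_x\varphi+\tfrac{\pr b}{\pr x}\varphi\big)ds+\pr_x\varphi\circ d\mu_s=0$ with $\varphi(t,\cdot)=\psi$, explicitly $\varphi(s,X_s(x))=\psi(X_t(x))\,X_t'(x)/X_s'(x)$. With this choice the right-hand side of the extended identity vanishes, leaving $\int_\rr w(t,x)\psi(x)\,dx=\int_\rr w(0,x)\varphi(0,x)\,dx=0$; since $\psi$ is arbitrary this gives $w(t,\cdot)=0$ for every $t$, which is the uniqueness assertion.

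The hard part of part~2) is twofold. First, the transported test function inherits only the regularity of the flow, which under A\ref{assfb} is $C^1$ in $x$, whereas~\eqref{eqweaks} involves $\pr_x\varphi$; I would circumvent this by approximating $b$ by functions smooth in $x$ (equivalently, mollifying $\varphi$ and estimating the commutator), carrying out the computation for the regularized problem and passing to the limit. Second, all spatial integrals run over the whole line, so the contribution near infinity must be controlled: Assumption A\ref{assfbl} is precisely what guarantees that the flow does not transport mass to infinity and that the integrations by parts leave no boundary term at infinity, while A\ref{assfuzerob} keeps the relevant integrals finite. Establishing these limit and tail estimates is where the bulk of the work in part~2) lies.
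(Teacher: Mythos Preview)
For part~1) your argument coincides with the paper's: both derive the chain-rule expansion of $X_t'(x)\varphi(X_t(x))$ from Theorem~\ref{thintf}, multiply by $u_0$, integrate in $x$, interchange via Lemma~\ref{lmlmfb}, and then substitute back $x=X_s(y)$.

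For part~2) you take a genuinely different route. The paper does not argue by duality; instead it first \emph{removes the stochastic term} by the shift $V(t,z)=u(t,z+\mu_t)$. Applying~\eqref{eqintf} to $G(t,y)=\int_\rr u(t,x)\varphi(x-y)\,dx$ at $y=\mu_t$, the two symmetric integrals cancel and $V$ is seen to satisfy a purely deterministic weak transport equation with drift $B(t,z)=b(t,z+\mu_t)$. From there the proof is classical DiPerna--Lions: mollify $V$ to $V_\varepsilon$, invoke the commutator lemma of~\cite{diplio89}, multiply by $V_\varepsilon\pi_r$ with a spatial cutoff $\pi_r$, and close by Gronwall; Assumption~A\ref{assfbl} enters precisely to kill the tail term $\int_\rr B V^2\pi_r'\,dx$ as $r\to\infty$. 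The payoff of the paper's shift is that no stochastic product rule is ever needed---once $V$ is introduced, everything is ordinary calculus. Your duality approach instead keeps the stochastic integral and must therefore first extend~\eqref{eqweaks} to test functions of the form $\tilde\varphi(\mu_s,s,x)$ (your adjoint $\varphi$ is random through the flow, not merely $C^1$ in $s$), which is additional work you only sketch. On the other hand, your transported test function has compact $x$-support uniformly on $[0,t]$, so integrations by parts produce no boundary contribution; your stated role for A\ref{assfbl} is therefore unconvincing, and if your scheme can be made rigorous it may well dispense with A\ref{assfbl} altogether---a point worth pinning down rather than hand-waving.
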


\begin{remark} Note that $u(t,x)=u_0(X_t^{-1}(x))$ has a form $h(\mu_t,t,x)$ from the second part of the theorem.
This follows from Assumption~A\ref{assfuzero} and standard statements about differentiability of inverse functions.
We have that $X_t(x)=g(\mu_t,t,x)$, where \mbox{$g\in \cc^{1,1,1} (\rr\times [0,T]\times\rr)$}.
For the mapping
$$
(\mu,t,x)\to (\mu,t,g(\mu,t,x)),
$$
the matrix of the first derivatives is non-degenerated. Therefore, the inverse mapping is well-defined and smooth.
\end{remark}

\begin{remark} Let us compare our assumptions with those made in other papers. Usually, it is supposed that $u_0$ is measurable and bounded (see, for example, \cite{flandoli10}, \cite{moham15}, \cite{oltud15}, \cite{wei22dini}). We additionally assume that $u_0$ has a continuous derivative, we need this to guarantee that the symmetric integral of $u_0(X_t^{-1}(x))$ be well-defined.

Condition of differentiability of $b$ is standard, boundedness of $\dfrac{\pr b}{\pr x}$ may be assumed in some $\llf_p$ norm (see \cite{flandoli10}, \cite{wei22dini}) or uniformly (\cite{oltud15}). Note that in~\cite{moham15} the main result was obtained for arbitrary bounded measurable~$b$.

Our integrability condition~A\ref{assfbl} is technical and is important for our method. It is similar to respective assumptions in~\cite{beck19}, \cite{catoli}, \cite{mucha10}.
\end{remark}

\section{Existence of the solution}\label{scexis}

In this section, we prove the first statement of our theorem.

By the chain rule~\eqref{eqintf}, for $\varphi\in\cc_0^{\infty}(\rr)$ we have
\begin{equation}
\begin{split}
{d}_t\Bigl[X_t'(x)\varphi(X_t(x))\Bigr]=\varphi(X_t(x)){d}_t\Bigl[X_t'(x)\Bigr]+X_t'(x) {d}_t\Bigl[\varphi(X_t(x))\Bigr]\\
\stackrel{\eqref{eqaddx},\eqref{eqdifx}}{=}\varphi(X_t(x))  \dfrac{\partial b(t,X_{t}(x))}{\partial x} X'_{t}(x)\,{d}t
+X_t'(x)\varphi'(X_t(x)) b(t,X_{t}(x))\,{d}t +X_t'(x)\varphi'(X_t(x))\circ{d}\mu(t).\label{equchr}
\end{split}
\end{equation}

Applying the change of variables $y=X_t(x)$, we get
\begin{eqnarray*}
\int_{\rr}u_0(X_t^{-1}(y))\varphi(y)\,{d}y=\int_{\rr}u_0(x)X_t'(x)\varphi(X_t(x))\,{d}x\\
=\int_{\rr}u_0(x)\Bigl[X_t'(x)\varphi(X_t(x))\Bigr|_{t=0}+\int_{0}^t {d}_s\bigl[X_s'(x)\varphi(X_s(x))\bigr]\Bigr]\,{d}x\\
\stackrel{\eqref{equchr}}{=}\int_{\rr}u_0(x)\varphi(x)\,{d}x+\int_{\rr}u_0(x)\int_{0}^t\varphi(X_s(x))\dfrac{\partial b(s,X_{s}(x))}{\partial x} X'_{s}(x) \,{d}s\,{d}x\\
+\int_{\rr}u_0(x) \int_{0}^t X_s'(x)\varphi'(X_s(x)) b(s,X_{s}(x))\,{d}s\,{d}x
+\int_{\rr}u_0(x) \int_{0}^t X_s'(x)\varphi'(X_s(x)) \circ{d}\mu_s\,{d}x \\
\stackrel{\eqref{eqlmfb}}{=}\int_{\rr}u_0(x)\varphi(x)\,{d}x+\int_{0}^t \int_{\rr}u_0(x) \varphi(X_s(x))\dfrac{\partial b(s,X_{s}(x))}{\partial x} X'_{s}(x)\,{d}x\,{d}s \\
+\int_{0}^t \int_{\rr}u_0(x) X_s'(x)\varphi'(X_s(x)) b(s,X_{s}(x))\,{d}x\,{d}s
+\int_{0}^t \int_{\rr}u_0(x) X_s'(x)\varphi'(X_s(x)) \,{d}x\,\circ{d}\mu_s .
\end{eqnarray*}
Lemma~\ref{lmlmfb} may be applied here because $\varphi$ has a compact support. Assumption A\ref{assfb} and~\eqref{eqdifx} imply that $C_1\le X_s'\le C_2$ for some positive constants $C_1$ and $C_2$, therefore set $\{x:\ \varphi'(X_s(x))\ne 0\}$ is bounded.

Taking the inverse change of variable $x=X_t^{-1}(y)$, obtain
\begin{eqnarray*}
\int_{\rr}u_0(X_t^{-1}(y))\varphi(y)\,{d}y
=\int_{\rr}u_0(x)\varphi(x)\,{d}x+\int_{0}^t  \int_{\rr}u_0(X_s^{-1}(y)) \varphi(y)\dfrac{\partial b(s,y)}{\partial x} \,{d}y\,{d}s\\
+\int_{0}^t \int_{\rr}u_0(X_s^{-1}(y)) \varphi'(y) b(s,y)\,{d}y \,{d}s+\int_{0}^t  \int_{\rr}u_0(X_s^{-1}(y)) \varphi'(y) \,{d}y\,\circ{d}\mu_s.
\end{eqnarray*}
Thus, $u(t,x)=u_0(X_t^{-1}(x))$ satisfies~\eqref{eqweaks}.

\section{Uniqueness of the solution}\label{scuniq}

In this section, we prove the second statement of our theorem. We will follow the standard approach (see, for example, proof of the uniqueness of the solution in~\cite{catoli}, \cite{mucha10}).

Let $u(t,x)$ satisfies~\eqref{eqweaks} with $u_0(x)=0$. We will obtain that $u(t,x)=0$ what implies the uniqueness of the solution.

For this case, from \eqref{eqweaks} for $\varphi\in\cc_0^{\infty}(\rr)$ we get
\begin{equation}\label{equtxv}
\begin{split}
\int_{\rr}u(t,x)\varphi(x)\,{d}x=\int_0^t  \int_{\rr}u(s,x)\Bigl(b(s,x)\varphi'(x)+\dfrac{\partial b(s,x)}{\partial x}\varphi(x)\Bigr)\,{d}x\,{d}s\\
+\int_0^t \int_{\rr}u(s,x)\varphi'(x)\,{d}x\,\circ {d}\mu_s.
\end{split}
\end{equation}

Denote
$$
G(t,y)=\int_{\rr}u(t,x)\varphi(x-y)\,{d}x
$$
We have that $G(0,y)=0$ because $u(0,x)=0$, and
\begin{eqnarray*}
\dfrac{\pr G(t,y)}{\pr y}=-\int_{\rr}u(t,x)\varphi'(x-y)\,{d}x,
\end{eqnarray*}
Our solution has a form $u(t,x)=h(\mu_t,t,x)$, and, applying \eqref{eqintf} and~\eqref{equtxv}, we obtain
\begin{eqnarray*}
\int_{\rr}u(t,x)\varphi(x-\mu_t)\,{d}x=G(t,\mu_t)\\
=\int_0^t \int_{\rr}b(s,x)u(s,x) \varphi'(x-\mu_s) \,{d}x\,{d}s
+\int_0^t  \int_{\rr}\dfrac{\partial b(s,x)}{\partial x}u(s,x)\varphi(x-\mu_s)\,{d}x\,{d}s\\
+\int_{0}^t \int_{\rr}u(s,x)\varphi'(x-\mu_s)\,{d}x\,\circ{d}\mu_s-\int_{0}^t \int_{\rr}u(s,x)\varphi'(x-\mu_s)\,{d}x\,\circ{d}\mu_s\\
{=}\int_0^t \int_{\rr}b(s,x)u(s,x) \varphi'(x-\mu_s)\,{d}x\,{d}s
+\int_0^t \int_{\rr}\dfrac{\partial b(s,x)}{\partial x}u(s,x)\varphi(x-\mu_s)\,{d}x\,{d}s .
\end{eqnarray*}

For $V(t,z)=u(t,z+\mu_t)$, taking change of the variable $x= z+\mu_t$, get
\begin{equation}\label{eqvtzb}
\begin{split}
\int_{\rr}V(t,z)\varphi(z)\,{d}z=\int_0^t \int_{\rr}b(s,z+\mu_s)V(s,z)\dfrac{d\varphi(z)}{dz}\,{d}z\,{d}s \\
+\int_0^t \int_{\rr}\dfrac{\partial b(s,z+\mu_s)}{\partial z}V(s,z)\varphi(z)\,{d}z\,{d}s .
\end{split}
\end{equation}

Let   $\phi_{\varepsilon}$ be a standard mollifier,
\begin{eqnarray*}
\phi_{\varepsilon}(x)=\dfrac{1}{\varepsilon}\phi\Bigl(\dfrac{x}{\varepsilon}\Bigr),\quad \phi\in \cc_0^{\infty}(\rr),\quad {\rm supp}\,\phi\subset [-1,1],\quad
\phi(x)\ge 0,\quad \int_{\rr}\phi(x)\,{d}x=1.
\end{eqnarray*}
Denote $V_{\varepsilon}(t,x):=V(t,\cdot)*\phi_{\varepsilon}$. Substituting $\varphi(z)=\phi_{\varepsilon}(x-z)$ in~\eqref{eqvtzb}, obtain
\begin{eqnarray*}
V_{\varepsilon}(t,x)=\int_{\rr}V(t,z)\phi_{\varepsilon}(x-z)\,{d}z\\
=-\int_0^t \int_{\rr}b(s,z+\mu_s)V(s,z)\phi'_{\varepsilon}(x-z)\,{d}z\,{d}s
+\int_0^t \int_{\rr}\dfrac{\partial b(s,z+\mu_s)}{\partial z}V(s,z)\phi_{\varepsilon}(x-z)\,{d}z\,{d}s .
\end{eqnarray*}
We take the derivative with respect to $t$, use the notation $B(t,z)=b(t,z+\mu_t)$, and get
\begin{eqnarray*}
\dfrac{\partial V_{\varepsilon}(t,x)}{\partial t}= -\int_{\rr}B(t,z)V(t,z)\dfrac{\partial \phi_{\varepsilon}(x-z)}{\partial x}\,{d}z
+\int_{\rr}\dfrac{\partial B(t,z)}{\partial z}V(t,z)\phi_{\varepsilon}(x-z)\,{d}z\\
=-\dfrac{\partial }{\partial x} \int_{\rr}B(t,z)V(t,z)\phi_{\varepsilon}(x-z)\,{d}z
+\int_{\rr}\Bigl[\dfrac{\partial }{\partial z}[B(t,z)V(t,z)]-B(t,z)\dfrac{\partial V(t,z)}{\partial z}\Bigr]\phi_{\varepsilon}(x-z)\,{d}z\\
\stackrel{(*)}{=}-\dfrac{\partial }{\partial x} ( BV(t,\cdot)*\phi_{\varepsilon})(x)+\dfrac{\partial }{\partial x} ( BV(t,\cdot)*\phi_{\varepsilon})(x)\\
-\int_{\rr}B(t,z)\dfrac{\partial V(t,z)}{\partial z}\phi_{\varepsilon}(x-z)\,{d}z
=-\int_{\rr}B(t,z)\dfrac{\partial V(t,z)}{\partial z}\phi_{\varepsilon}(x-z)\,{d}z.
\end{eqnarray*}

In (*) we have used that $\phi_{\varepsilon}$ has a compact support, and, by integration in parts,
\begin{eqnarray*}
\int_{\rr}\dfrac{\partial }{\partial z}[B(t,z)V(t,z)]\phi_{\varepsilon}(x-z)\,{d}z=
\int_{\rr}\phi_{\varepsilon}(x-z)\,{d}[B(t,z)V(t,z)]\\
=-\int_{\rr}[B(t,z)V(t,z)]\,{d}_z\phi_{\varepsilon}(x-z)=\int_{\rr}[B(t,z)V(t,z)]\dfrac{\partial \phi_{\varepsilon}(x-z)}{\partial x}\,{d}z
=\dfrac{\partial }{\partial x} ( BV(t,\cdot)*\phi_{\varepsilon})(x).
\end{eqnarray*}

Thus,
\begin{eqnarray}\label{eqeqdif}
 \dfrac{\partial V_{\varepsilon}(t,x)}{\partial t}
+\Bigl(B(t,z)\dfrac{\partial V(t,z)}{\partial z}\Bigr)*\phi_{\varepsilon}(x)=0.
\end{eqnarray}

Denote
\begin{equation}\label{eqdifr}
\mathcal{R}_{\varepsilon}(B,V)=\dfrac{\partial V_{\varepsilon}(t,x)}{\partial t}+B(t,x)
\dfrac{\partial V_{\varepsilon}(t,x)}{\partial x}
\stackrel{\eqref{eqeqdif}}{=}B\dfrac{\partial (\phi_{\varepsilon}*V)}{\partial x}-\phi_{\varepsilon}*\Bigl(B\dfrac{\partial V}{\partial x}\Bigr).
\end{equation}

Lemma II.1~i)~\cite{diplio89} gives that for each fixed $t$
\begin{equation}\label{eqleml}
\mathcal{R}_{\varepsilon}(B,V_{\varepsilon})\to 0,\quad \varepsilon\to 0 \quad{\textrm{in}}\quad \llf^1_{loc}(\rr,{d}x)
\end{equation}
provided that $B(t,\cdot)\in {\sf W}^{1,1}_{loc}(\rr)$, $V(t,\cdot)\in \llf^{\infty}_{loc}(\rr,{d}x)$, where ${\sf W}$ denotes the Sobolev space. These conditions hold due to assumptions of our theorem.

Consider $\pi_r(x)=\pi_1(x/r)$, where
$$
\pi_1(x)=
\left\{
\begin{array}{l}
1,\quad |x|<1,\\
\in [0,1],\quad 1\le |x|\le 2,\\
0,\quad |x|>2.
\end{array}
\right.
$$
with $|\pi_r'|\le \dfrac{C}{r}$. We have that
\begin{eqnarray*}
\int_{\rr} {d}_x(B(V_{\varepsilon})^2\pi_r)=0\quad
\Leftrightarrow\quad\int_{\rr} (V_{\varepsilon})^2\pi_r\,{d}_xB+\int_{\rr} B\pi_r \,{d}_x(V_{\varepsilon})^2+\int_{\rr} B(V_{\varepsilon})^2\,{d}\pi_r=0\\
\Leftrightarrow \int_{\rr} B\pi_r V_{\varepsilon} \dfrac{\partial V_{\varepsilon}}{\partial x}\,{d}x=-\dfrac{1}{2}\int_{\rr} (V_{\varepsilon})^2\pi_r\dfrac{\partial B}{\partial x}\,{d}x-\dfrac{1}{2}\int_{\rr} B(V_{\varepsilon})^2 \pi'_r\,{d}x.
\end{eqnarray*}
We multiply \eqref{eqdifr} by $V_{\varepsilon}(t,x)\pi_r(x)$, take the integral over~$\rr$, and get
\begin{eqnarray*}
\int_{\rr} \mathcal{R}_{\varepsilon}(B,V_{\varepsilon}) V_{\varepsilon}\pi_r\,{d}x=\dfrac{1}{2}\int_{\rr}  \dfrac{\partial V^2_{\varepsilon}}{\partial t} \pi_r\,{d}x+ \int_{\rr} B V_{\varepsilon}\pi_r\dfrac{\partial V_{\varepsilon}}{\partial x}\,{d}x\\
\Leftrightarrow \int_{\rr} \mathcal{R}_{\varepsilon}(B,V_{\varepsilon}) V_{\varepsilon}\pi_r \,{d}x=\dfrac{1}{2} \dfrac{\partial }{\partial t} \int_{\rr} V^2_{\varepsilon}\pi_r\,{d}x-\dfrac{1}{2}\int_{\rr} (V_{\varepsilon})^2\pi_r\dfrac{\partial B}{\partial x}\,{d}x
-\dfrac{1}{2}\int_{\rr} B(V_{\varepsilon})^2 \pi'_r\,{d}x.
\end{eqnarray*}

From~\eqref{eqleml} it follows that for fixed $r$ and $t$
$$
\int_{\rr} \mathcal{R}_{\varepsilon}(B,V_{\varepsilon}) V_{\varepsilon}\pi_r \,{d}x\to 0,\quad \varepsilon\to 0.
$$
Therefore,
\begin{equation}
\begin{split}
\lim_{\varepsilon\to 0} \Bigl(\dfrac{\partial }{\partial t} \int_{\rr} V^2_{\varepsilon} \pi_r\,{d}x-\int_{\rr} (V_{\varepsilon})^2\pi_r\dfrac{\partial B}{\partial x}\,{d}x -\int_{\rr} B(V_{\varepsilon})^2 \pi'_r\,{d}x\Bigr)=0\\
\Leftrightarrow
 \dfrac{\partial }{\partial t} \int_{\rr}V^2 \pi_r\,{d}x-\int_{\rr} V^2\pi_r\dfrac{\partial B}{\partial x}\,{d}x =\int_{\rr} BV^2 \pi'_r\,{d}x. \label{eqintt}
\end{split}
\end{equation}
Because $\pi'_r(x)=0$ for $|x|\le r$ or $|x|\ge 2r$, and $|\pi_r'|\le \dfrac{C}{r}$, we have
\begin{equation}\label{eqbvtz}
\begin{split}
\Bigl|\int_{\rr} BV^2 \pi'_r\,{d}x\Bigr|\le \|V\|_{\llf_\infty}^2\int_{r\le |x|\le 2r} \dfrac{|B(t,x)|}{1+|x|}(1+|x|)|\pi'_r(x)|\,{d}x\to 0,\quad
r\to\infty,
\end{split}
\end{equation}
where convergence holds uniformly in~$t$ for each fixed~$\omega$.

In \eqref{eqbvtz} we have used the following estimates. If $\sup_t|\mu_t|=M(\omega)$ then
\begin{eqnarray*}
\int_{r\le |x|\le 2r} \dfrac{|B(t,x)|}{1+|x|}\,{d}x\stackrel{y=x+\mu_t}{=} \int_{r\le |y-\mu_t|\le 2r} \dfrac{|b(t,y)|}{1+|y-\mu_t|}\,{d}y\\
\le \int_{|y|\ge r-M(\omega)} \dfrac{|b(t,y)|}{1+|y|-M(\omega)}\,{d}y\stackrel{A\ref{assfbl}}{\to} 0,\quad r\to\infty.
\end{eqnarray*}

Integrating \eqref{eqintt} in~$t$ and taking into account that
$$
\int_{\rr}V^2(t,x) \pi_r(x)\,{d}x\Bigr|_{t=0}=\int_{\rr}u(0,x)^2\pi_r(x)\,{d}x=0,
$$
we get
\begin{eqnarray}\label{eqvtri}
 \int_{\rr}V^2 \pi_r\,{d}x=\int_0^t \int_{\rr} V^2\pi_r\dfrac{\partial B}{\partial x}\,{d}x\,{d}s +\int_0^t \int_{\rr} BV^2 \pi'_r\,{d}x\,{d}s
\end{eqnarray}

Consider
$$
g_r(t,x)=V^2(t,x) \pi_{r}(x).
$$
By A\ref{assfb}, we have $\Bigl|\dfrac{\partial B}{\partial x}\Bigr|\le K$ for some constant~$K$. From~\eqref{eqvtri}, we get
\begin{eqnarray*}
\int_{\rr} g_r(t,x)\,{d}x\le \int_0^t \int_{\rr} g_r(s,x)\Bigl|\dfrac{\partial B}{\partial x}\Bigr|\,{d}x\,{d}s+R_r
\le K\int_0^t \int_{\rr} g_r(s,x)\,{d}x\,{d}s+R_r,\\
R_r=\sup_t \Bigl|\int_0^t \int_{\rr} BV^2 \pi'_r\,{d}x\,{d}s \Bigr|\stackrel{\eqref{eqbvtz}}{\to}0,\quad r\to \infty.
\end{eqnarray*}

From the Gronwall inequality for $h(t)=\int_{\rr} g_r(t,x)\,{d}x$, we get
$$
\int_{\rr} g_r(t,x)\,{d}x\le R_re^{Kt}.
$$
Taking $r\to\infty$, we get
$$
\int_{\rr} g_r(t,x)\,{d}x\to \int_{\rr} V^2\,{d}x,\quad R_re^{Kt}\to 0\Rightarrow V=0
$$
that finishes the proof of uniqueness of the solution.

\bibliographystyle{bib/vmsta-mathphys}
\bibliography{RadchenkoTranspEq22}

\end{document}